\newcommand{\CC}{\mathbb{C}}
\newcommand{\EE}{\mathbb{E}}
\newcommand{\NN}{\mathbb{N}}
\newcommand{\PP}{\mathbb{P}}
\newcommand{\ZZ}{\mathbb{Z}}
\newcommand{\del}{\partial}
\renewcommand{\bold}[1]{\mathbf{#1}}
\begin{document}

\title[Linr. Trans. \& The Multv. Gen. Fn.]{Linear Transformations \& the Multivariate Generating Function}
\author[M. C. Burkhart]{Michael C. Burkhart}
\subjclass[2010]{05A15, 32A05, 60--08}



\theoremstyle{plain}
\newtheorem{thm}{Theorem}
\newtheorem*{lem}{Lemma}
\newtheorem{prop}{Proposition}
\newtheorem*{cor}{Corollary}
\newtheorem{methd}{Method}

\theoremstyle{definition}
\newtheorem*{defn}{Definition}
\newtheorem*{conj}{Conjecture}
\newtheorem*{exmp}{Example}

\theoremstyle{remark}
\newtheorem*{rem}{Remark}
\newtheorem*{note}{Note}
\newtheorem*{case}{Case}


\begin{abstract}
This note examines linear combinations of multi-indexed sequences and derives the multivariate generating function of such a linear combination in terms of the original sequence's m.g.f.  Applications include finding distributions and moments of non-negative discrete random variables conditioned on non-negative linear combinations of the original variables.  Examples include independent Poisson r.v.'s and a $d$-variate multinomial distribution.
\end{abstract}


\maketitle


\section{Introduction}
Set $\NN = \ZZ_{\ge 0}$.  Let $\frak b: \NN^d \rightarrow \CC $ denote a multi-indexed sequence of complex numbers.  Then the multivariate generating function for $\frak b$ is given by:
\[
G_{\frak b}(t_1,\dotsc, t_d) 
= \sum_{(j_1,\dotsc, j_d) \in\NN^d} \frak b_{(j_1,\dotsc,j_d)} t_1^{j_1}\dotsb t_d^{j_d}
\]
Such generating functions have applications throughout discrete mathematics, especially to combinatorial classes and probability distributions~\cite{FS}, and prove useful in finding recurrences, moments, and asymptotics~\cite{Wilf, PW}.  Multivariate generating functions can be used to obtain conditional distributions~\cite{X, JKB}.  Applications to biochemistry include stochastic models of chemical network theory, in particular chemical kinetics~\cite{SZ}.


\section{Main Result}
Fix some matrix $\bold A=(a_{ij}) \in \bold{Mat}_{m\times d}(\NN)$.  Define a new multi-indexed sequence $\frak c: \NN^m \rightarrow \CC$ by taking linear combinations of the sequence $\frak b$ using the coefficients of the matrix $\bold A$.  That is, for $(k_1,\dotsc,k_m) \in \NN^m$ set:
\[
\frak c_{(k_1,\dotsc,k_m)} 
= \sum_{\substack{(j_1,\dotsc,j_d)\in\NN^d \\ (k_1,\dotsc,k_m)^T=\bold A(j_1,\dotsc,j_d)^T }} \frak b_{(j_1,\dotsc,j_d)}
\]

\begin{thm}\label{main}
The analogously-defined multivariate generating function for the sequence $\frak c$ is given by:
\[
G_{\frak c}(z_1,\dotsc, z_m) 
=  G_{\frak b} (\Pi_{i=1}^m z_i^{a_{i1}}, \dotsc, \Pi_{i=1}^m z_i^{a_{id}} )
\]
where $\bold A = (a_{ij})$.
\end{thm}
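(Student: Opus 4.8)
The plan is to unwind the definition of $G_{\frak c}$ and recognize that the doubly-indexed sum it produces collapses into a single sum that is exactly $G_{\frak b}$ evaluated at the claimed arguments. First I would write
\[
G_{\frak c}(z_1,\dotsc,z_m) = \sum_{(k_1,\dotsc,k_m)\in\NN^m} \Biggl( \sum_{\substack{(j_1,\dotsc,j_d)\in\NN^d \\ (k_1,\dotsc,k_m)^T = \bold A(j_1,\dotsc,j_d)^T}} \frak b_{(j_1,\dotsc,j_d)} \Biggr) z_1^{k_1}\dotsb z_m^{k_m},
\]
substituting the definition of $\frak c$. The central observation is that every $(j_1,\dotsc,j_d)\in\NN^d$ determines a unique index $(k_1,\dotsc,k_m)^T = \bold A(j_1,\dotsc,j_d)^T$, which lies in $\NN^m$ precisely because $\bold A$ has nonnegative integer entries. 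Consequently the constraint in the inner sum partitions $\NN^d$ according to the value of $\bold A(j_1,\dotsc,j_d)^T$, so the nested sum is merely a reindexing of a single sum ranging over all of $\NN^d$.

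Carrying this out, I would replace the factor $z_1^{k_1}\dotsb z_m^{k_m}$ by its value under $k_i = \sum_{l=1}^d a_{il} j_l$, giving
\[
\prod_{i=1}^m z_i^{k_i} = \prod_{i=1}^m z_i^{\sum_{l=1}^d a_{il} j_l} = \prod_{i=1}^m \prod_{l=1}^d z_i^{a_{il} j_l} = \prod_{l=1}^d \Bigl( \prod_{i=1}^m z_i^{a_{il}} \Bigr)^{j_l}.
\]
Setting $t_l = \prod_{i=1}^m z_i^{a_{il}}$ for $1 \le l \le d$ turns this into $t_1^{j_1}\dotsb t_d^{j_d}$, so the collapsed sum reads $\sum_{(j_1,\dotsc,j_d)\in\NN^d} \frak b_{(j_1,\dotsc,j_d)} t_1^{j_1}\dotsb t_d^{j_d}$, which is exactly $G_{\frak b}(t_1,\dotsc,t_d)$ with these $t_l$. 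This is the asserted identity.

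The main thing to watch is the legitimacy of rearranging the order of summation. The interchange is immediate for formal power series in the $z_i$ whenever each monomial receives only finitely many contributions; otherwise (for instance, when $\bold A$ has a zero column, so that infinitely many $(j_1,\dotsc,j_d)$ share a single image) it must be underwritten by absolute convergence via Fubini--Tonelli on the relevant polydisc, which is the same hypothesis that makes the inner sums defining $\frak c$ meaningful in the first place. Granting that, the computation above is purely a bookkeeping rearrangement, and I expect no further obstacle.
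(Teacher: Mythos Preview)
Your argument is correct and follows essentially the same route as the paper: expand $G_{\frak c}$ via the definition of $\frak c$, exchange the order of summation so that the inner sum over $(k_1,\dotsc,k_m)$ collapses to a single term, and then regroup the exponents to recognize $G_{\frak b}$ at the substituted arguments. Your added remark on the convergence/formal-series justification for the interchange is a welcome refinement that the paper leaves implicit.
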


\begin{proof}
This proof mirrors Sontag and Zeilberger's proof of the special case where $\frak b_{(j_1,\dotsc,j_d)}$ is the joint probability distribution for independent Poisson random variables~\cite{SZ}.  From the definition:
\begin{multline*}
G_{\frak c} (z_1,\dotsc, z_m)
= \sum_{(k_1,\dotsc, k_m) \in\NN^m} \frak c_{(k_1,\dotsc,k_m)} z_1^{k_1}\dotsb z_m^{k_m} \\
= \sum_{(k_1,\dotsc, k_m) \in\NN^m} \Biggl( \sum_{\substack{(j_1,\dotsc,j_d)\in\NN^d \\ (k_1,\dotsc,k_m)^T=\bold A(j_1,\dotsc,j_d)^T }} \frak b_{(j_1,\dotsc,j_d)}\Biggr) z_1^{k_1}\dotsb z_m^{k_m}
\end{multline*}
Exchaning the order of summation then yields:
\begin{multline*}
G_{\frak c} (z_1,\dotsc, z_m)
= \sum_{(j_1,\dotsc,j_d)\in\NN^d} \Biggl( \sum_{\substack{(k_1,\dotsc, k_m) \in\NN^m \\ (k_1,\dotsc,k_m)^T=\bold A(j_1,\dotsc,j_d)^T }} \frak b_{(j_1,\dotsc,j_d)} z_1^{k_1}\dotsb z_m^{k_m} \Biggr) \\
= \sum_{(j_1,\dotsc,j_d)\in\NN^d} \frak b_{(j_1,\dotsc,j_d)} z_1^{a_{11}j_1+\dotsb+a_{1d}j_d}\dotsb z_m^{a_{m1}j_1+\dotsb+a_{md}j_d} \\
= \sum_{(j_1,\dotsc,j_d)\in\NN^d} \frak b_{(j_1,\dotsc,j_d)} (z_1^{a_{11}}\dotsb z_m^{a_{m1}})^{j_1} \dotsb (z_1^{a_{1d}} \dotsb z_m^{a_{md}})^{j_d} \\
=G_{\frak b}(z_1^{a_{11}}\dotsb z_m^{a_{m1}}, \dotsc, z_1^{a_{1d}}\dotsb z_m^{a_{md}})
=G_{\frak b} (\Pi_{i=1}^m z_i^{a_{i1}}, \dotsc, \Pi_{i=1}^m z_i^{a_{id}} )
\end{multline*}

\end{proof}


\section{Probability Generating Functions}

Let $X_1,\dotsc,X_d$ be non-negative discrete random variables (not necessarily independent).  The multivariate probability generating function (henceforth denoted p.g.f.) of $X_1,\dotsc,X_d$ is then given by:
\[
G_{\bold X}(t_1,\dotsc, t_d) 
= \sum_{(j_1,\dotsc, j_d) \in\NN^d} \PP(X_1=j_1,\dotsc, X_d=j_d)\ t_1^{j_1}\dotsb t_d^{j_d}
\]
Define new random variables $Y_1, \dotsc, Y_m$ by taking linear combinations of the $X_i$:
\[\tag{$\star$}
\left(\begin{smallmatrix} Y_1 \\ \vdots \\ Y_m \end{smallmatrix} \right)
= \bold A \left(\begin{smallmatrix} X_1 \\ \vdots \\ X_d \end{smallmatrix} \right)
\]
\begin{prop}\label{p.g.f.}
The analogously-defined multivariate p.g.f. for $Y_1,\dotsc, Y_m$ is given by:
\begin{multline*}
G_{\bold Y}(z_1,\dotsc,z_m) 
=\sum_{(k_1,\dotsc, k_m) \in\NN^m} \PP(Y_1=k_1,\dotsc, Y_m=k_m)\ z_1^{k_1}\dotsb z_m^{k_m} \\
= G_{\bold X}(\Pi_{i=1}^m z_i^{a_{i1}}, \dotsc, \Pi_{i=1}^m z_i^{a_{id}} )
\end{multline*}
\end{prop}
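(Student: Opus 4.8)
The plan is to recognize Proposition~\ref{p.g.f.} as a direct specialization of Theorem~\ref{main}, so that essentially no new computation is required. The key observation is that the multivariate p.g.f. $G_{\bold X}$ is nothing more than the multivariate generating function $G_{\frak b}$ for the particular choice of sequence $\frak b_{(j_1,\dotsc,j_d)} = \PP(X_1=j_1,\dotsc,X_d=j_d)$. This is a legitimate $\NN^d \to \CC$ sequence (indeed it is $\NN^d \to [0,1]$), so the hypotheses of Theorem~\ref{main} are met verbatim.

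With that identification in place, I would first verify that the defining relation $(\star)$ for $Y_1,\dotsc,Y_m$ matches the combinatorial linear-combination construction of the sequence $\frak c$. Concretely, the event $\{Y_1=k_1,\dotsc,Y_m=k_m\}$ is by $(\star)$ exactly the event that $(k_1,\dotsc,k_m)^T = \bold A (X_1,\dotsc,X_d)^T$, and partitioning this event according to the value $(j_1,\dotsc,j_d)$ taken by $(X_1,\dotsc,X_d)$ gives, by countable additivity of $\PP$,
\[
\PP(Y_1=k_1,\dotsc,Y_m=k_m) = \sum_{\substack{(j_1,\dotsc,j_d)\in\NN^d \\ (k_1,\dotsc,k_m)^T=\bold A(j_1,\dotsc,j_d)^T}} \PP(X_1=j_1,\dotsc,X_d=j_d).
\]
The right-hand side is precisely the definition of $\frak c_{(k_1,\dotsc,k_m)}$ with the above choice of $\frak b$. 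Hence the p.g.f. of $\bold Y$ coincides with $G_{\frak c}$.

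The conclusion then follows immediately: by Theorem~\ref{main},
\[
G_{\bold Y}(z_1,\dotsc,z_m) = G_{\frak c}(z_1,\dotsc,z_m) = G_{\frak b}\bigl(\textstyle\prod_{i=1}^m z_i^{a_{i1}},\dotsc,\prod_{i=1}^m z_i^{a_{id}}\bigr) = G_{\bold X}\bigl(\textstyle\prod_{i=1}^m z_i^{a_{i1}},\dotsc,\prod_{i=1}^m z_i^{a_{id}}\bigr),
\]
which is the asserted identity. The only point requiring a little care — and the nearest thing to an obstacle — is the reindexing of the disjoint union of events in the countable-additivity step: one must check that for fixed $(k_1,\dotsc,k_m)$ the events indexed by distinct $(j_1,\dotsc,j_d)$ satisfying $\bold A(j_1,\dotsc,j_d)^T = (k_1,\dotsc,k_m)^T$ are mutually exclusive (they are, since $(X_1,\dotsc,X_d)$ takes a single value) and that their union is the full event $\{Y=k\}$. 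Since $\bold A$ has non-negative integer entries and the $X_i$ are non-negative, every preimage index lies in $\NN^d$, so no terms are lost or double-counted. Once this bookkeeping is confirmed, the proposition reduces to a one-line appeal to Theorem~\ref{main}.
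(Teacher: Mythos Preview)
Your proposal is correct and follows exactly the paper's approach: the paper's entire proof is the single sentence ``Apply the theorem to the multi-indexed sequences $\frak b_{(j_1,\dotsc,j_d)}=\PP(X_1=j_1,\dotsc, X_d=j_d)$ and $\frak c_{(k_1,\dotsc,k_m)}=\PP(Y_1=k_1,\dotsc, Y_m=k_m)$.'' You have supplied the same argument with the additional (and welcome) verification via countable additivity that $\frak c$ so defined really does coincide with the linear-combination sequence of Theorem~\ref{main}.
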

Apply the theorem to the multi-indexed sequences $\frak b_{(j_1,\dotsc,j_d)}=\PP(X_1=j_1,\dotsc, X_d=j_d)$ and $\frak c_{(k_1,\dotsc,k_m)}=\PP(Y_1=k_1,\dotsc, Y_m=k_m)$.

\subsection*{Independence}  
When in addition $X_1,\dotsc, X_d$ are independent non-negative discrete random variables, the p.g.f. takes the form:
\[
G_{\bold X}(t_1,\dotsc,t_d)
=\prod_{r=1}^d \Bigl( \sum_{j_r\in\NN} \PP(X_r=j_r)\ t_r^{j_r} \Bigr)
=\prod_{r=1}^d G_{X_r}(t_r)
\]
where $G_{X_r}$ is the single-variable probability generating function for $X_r$.  It follows that the p.g.f. for the linear combinations $Y_1,\dotsc, Y_m$ is given by:
\[
G_{\bold Y}(z_1,\dotsc,z_m)
=\prod_{r=1}^d \Bigl( G_{X_r}(\Pi_{i=1}^m z_i^{a_{ir}}) \Bigr)
\]
\begin{exmp}
When $X_1,\dotsc, X_d$ are independent Poisson random variables (\emph{cf.}~\cite{SZ}), $X_r\sim Poisson(\lambda_r)$, $1\leq r\leq n$:
\[
G_{\bold Y}(z_1,\dotsc, z_m) 
= \exp\left( \sum_{r=1}^d \lambda_r \Bigl( \prod_{i=1}^m z_i^{a_{ir}} -1 \Bigr) \right)
\]
\end{exmp}
Simply note that, for $1\leq r\leq n$:
\[
G_{X_r}(t_r) 
= \sum_{i\ge 0} \frac{\lambda_r^i e^{-\lambda_r}}{i!} t_r^{i}
= e^{-\lambda_r} \sum_{i\ge 0} \frac{(\lambda_r t_r)^i}{i!}
= \exp(\lambda_r t_r - \lambda_r)
\]


\section*{The Conditional Distribution of $\bold X \mid \bold Y$}

\begin{prop}\label{multi p.g.f.}
Let the random vectors $\bold X, \bold Y$ be given as in $(\star)$.  Then the joint multivariate p.g.f. for $\bold X, \bold Y$ is given:
\begin{multline*}
G_{\bold X, \bold Y} (t_1,\dotsc,t_d;z_1,\dotsc, z_m) \\
=\sum_{\substack{(j_1,\dotsc,j_d;k_1,\dotsc,k_m)\in\NN^{d+m} \\ (k_1,\dotsc,k_m)^T=\bold A(j_1,\dotsc,j_d)^T }}
\PP(\bold X = \bold j, \bold Y=\bold k)\ t_1^{d_1}\dotsb t_d^{j_d} \cdot z_1^{k_1}\dotsb z_m^{k_m} \\
= G_{\bold X}(t_1\cdot \Pi_{i=1}^m z_i^{a_{i1}}, \dotsc, t_d\cdot \Pi_{i=1}^m z_i^{a_{id}} )
\end{multline*}
\end{prop}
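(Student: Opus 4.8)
The plan is to mimic the proof of Theorem~\ref{main} closely, since Proposition~\ref{multi p.g.f.} is essentially the same bookkeeping argument applied to the joint distribution of $\bold X$ and $\bold Y$ rather than to $\bold Y$ alone. First I would observe that the summation in the definition of $G_{\bold X,\bold Y}$ runs over the constraint set $(k_1,\dotsc,k_m)^T = \bold A(j_1,\dotsc,j_d)^T$, and that on this set the event $\{\bold X=\bold j,\ \bold Y=\bold k\}$ is redundant: because $\bold Y$ is \emph{determined} by $\bold X$ through $(\star)$, the value $\bold k$ is forced once $\bold j$ is fixed and the constraint holds. Hence $\PP(\bold X=\bold j,\ \bold Y=\bold k) = \PP(\bold X = \bold j)$ for every admissible pair, and the whole double sum collapses to a single sum over $(j_1,\dotsc,j_d)\in\NN^d$ with the $\bold k$ exponents read off from $\bold A\bold j$.

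Carrying this out, I would substitute $k_i = a_{i1}j_1 + \dotsb + a_{id}j_d$ into the factor $z_1^{k_1}\dotsb z_m^{k_m}$ exactly as in the proof of the main theorem, and then regroup the powers of the $z_i$ variable-by-variable so that each original index $j_r$ collects a factor $t_r \cdot \prod_{i=1}^m z_i^{a_{ir}}$. The key algebraic identity is
\[
t_1^{j_1}\dotsb t_d^{j_d}\cdot z_1^{k_1}\dotsb z_m^{k_m}
= \Bigl( t_1 \prod_{i=1}^m z_i^{a_{i1}} \Bigr)^{j_1} \dotsb \Bigl( t_d \prod_{i=1}^m z_i^{a_{id}} \Bigr)^{j_d},
\]
where the $t_r$ factors simply ride along unchanged while the $z_i$ factors are redistributed across the $j_r$ by the same exchange-of-products step used before. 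Summing the resulting expression against $\PP(\bold X=\bold j)$ over all $\bold j\in\NN^d$ is by definition $G_{\bold X}$ evaluated at the arguments $t_r \prod_{i=1}^m z_i^{a_{ir}}$, which is the claimed right-hand side.

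I expect the main obstacle to be a conceptual rather than a computational one: justifying the collapse of the double sum to a single sum, i.e.\ that conditioning on the deterministic relation $(\star)$ contributes no extra freedom and that $\PP(\bold X=\bold j,\ \bold Y=\bold k)$ vanishes off the constraint set and equals $\PP(\bold X=\bold j)$ on it. Once that observation is made explicit, the remainder is the identical regrouping of monomials from Theorem~\ref{main}, now carrying the inert $t_r$ factors, and no separate convergence or interchange-of-summation issue arises beyond what was already handled there. I would therefore structure the write-up as one sentence establishing the degeneracy of the joint law on the constraint set, followed by a short displayed chain of equalities paralleling the main proof.
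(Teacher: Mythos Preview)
Your proposal is correct and follows essentially the same approach as the paper: both hinge on the observation that $\PP(\bold X=\bold j,\bold Y=\bold k)=\PP(\bold X=\bold j)$ on the constraint set, after which the monomial regrouping from Theorem~\ref{main} applies. The only cosmetic difference is that the paper packages the second step as a direct invocation of Theorem~\ref{main} with the modified sequence $\frak b_{(j_1,\dotsc,j_d)}=\PP(\bold X=\bold j)\,t_1^{j_1}\dotsb t_d^{j_d}$, whereas you inline that computation explicitly.
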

This follows from noting that $\PP(\bold X = \bold j, \bold Y=\bold k)=\PP(\bold X = \bold j)$ and then applying the theorem with $\frak b_{(j_1,\dotsc,j_d)}=\PP(\bold X = \bold j)\  t_1^{d_1}\dotsb t_d^{j_d}$.  From joint multivariate p.g.f., it is possible to obtain the conditional p.g.f. of $\bold X$ given $Y_1=k_1,\dotsc, Y_m=k_m$~\cite{X}.  Further, the conditional pure (resp. mixed) factorial momements correspond to taking the coefficient of $z_1^{k_1}\dotsb z_m^{k_m}$ in $G_{\bold X, \bold Y}$ (which will be a polynomial in $t_1,\dotsc,t_d$), taking pure (resp. mixed) partial derivatives with respect to the $t_r$, evaluating at $t_1=\dotsb=t_d=1$, and dividing by $\PP(\bold Y=\bold k)$.  Let $[z_1^{j_1}\dotsb z_d^{j_d}] G(z_1,\dotsc,z_d)$ denote the process of extracting the coefficient of $z_1^{j_1}\dotsb z_d^{j_d}$ in the formal power series $G(z_1,\dotsc,z_d)=\sum_{(j_1,\dotsc,j_d)\in\NN^d} G_{(j_1,\dotsc,j_d)}z_1^{j_1}\dotsb z_d^{j_d}$.  With this notation, it follows that:
\begin{multline*}
\EE(X_1^{(s_1)}\dotsb X_d^{(s_d)} \mid Y_1=k_1, \dotsc, Y_m=k_m)  \\
:= \EE\left(\frac{X_1!}{(X_1-s_1)!} \dotsb \frac{X_d!}{(X_d-s_d)!}\ \Big\vert\ Y_1=k_1, \dotsc, Y_m=k_m \right)  \\
= \sum_{\substack{(j_1,\dotsc, j_d) \in\NN^d \\ (k_1,\dotsc,k_m)^T=\bold A(j_1,\dotsc,j_d)^T }} \frac{j_1!}{(j_1-s_1)!} \dotsb \frac{j_d!}{(j_d-s_d)!}\ \PP(X_1=j_1,\dotsc, X_d=j_d \mid \bold Y) \\
=\frac{[z_1^{k_1}\dotsb z_m^{k_m}] \left( \frac{\del^{s_1}}{\del t_1^{s_1}} \dotsb \frac{\del^{s_d}}{\del t_d^{s_d}} G_{\bold X, \bold Y} (1,\dotsc,1;z_1,\dotsc, z_m) \right)}{[z_1^{k_1}\dotsb z_m^{k_m}] G_{\bold Y} (z_1,\dotsc,z_m)}
\end{multline*}
Combining this with Propositions \ref{p.g.f.} and \ref{multi p.g.f.} gives:
\begin{thm}\label{moments}
The conditional factorial moments of $\bold X$ given that $Y_1=k_1,\dotsc, Y_m=k_m$ are:
\begin{multline*}
\EE(X_1^{(s_1)}\dotsb X_d^{(s_d)} \mid \bold Y)  \\
=\frac{[z_1^{k_1}\dotsb z_m^{k_m}] \left( \frac{\del^{s_1}}{\del t_1^{s_1}} \dotsb \frac{\del^{s_d}}{\del t_d^{s_d}} G_{\bold X}(t_1\cdot \Pi_{i=1}^m z_i^{a_{i1}}, \dotsc, t_d\cdot \Pi_{i=1}^m z_i^{a_{id}} ) \right) \big\vert_{t_1=\dotsb=t_d=1} }{[z_1^{k_1}\dotsb z_m^{k_m}] G_{\bold X}(\Pi_{i=1}^m z_i^{a_{i1}}, \dotsc, \Pi_{i=1}^m z_i^{a_{id}} )}
\end{multline*}
\end{thm}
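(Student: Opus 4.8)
The plan is to first establish the ratio formula displayed immediately before the statement—expressing the conditional factorial moment as a quotient of two coefficient extractions—and then to substitute the closed forms supplied by Propositions \ref{p.g.f.} and \ref{multi p.g.f.}. Throughout I would assume $\PP(\bold Y = \bold k) > 0$, so that the conditioning, and hence the conditional expectation, is defined.

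First I would unwind the definition of the conditional factorial moment. By the definition of conditional expectation, $\EE(X_1^{(s_1)}\dotsb X_d^{(s_d)} \mid \bold Y = \bold k)$ is the sum over all $\bold j \in \NN^d$ satisfying $\bold A \bold j = \bold k$ of the falling factorial $\prod_{r=1}^d j_r!/(j_r-s_r)!$ weighted by $\PP(\bold X = \bold j \mid \bold Y = \bold k) = \PP(\bold X = \bold j, \bold Y = \bold k)/\PP(\bold Y = \bold k)$. The next step is to convert each falling factorial into a differentiation operator via the elementary identity $\frac{\del^{s_r}}{\del t_r^{s_r}} t_r^{j_r}\big\vert_{t_r=1} = j_r!/(j_r-s_r)!$ for $j_r \ge s_r$, the operator annihilating terms with $j_r < s_r$ exactly as the falling factorial does. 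Applying $\frac{\del^{s_1}}{\del t_1^{s_1}}\dotsb\frac{\del^{s_d}}{\del t_d^{s_d}}$ termwise to the joint p.g.f. $G_{\bold X,\bold Y}(t_1,\dotsc,t_d;z_1,\dotsc,z_m)$ and setting $t_1=\dotsb=t_d=1$ produces a formal power series in the $z_i$ whose coefficient of $z_1^{k_1}\dotsb z_m^{k_m}$ is precisely the numerator sum; dividing by $[z_1^{k_1}\dotsb z_m^{k_m}]\,G_{\bold Y}(z_1,\dotsc,z_m) = \PP(\bold Y = \bold k)$ yields the intermediate ratio formula.

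The concluding step is a direct substitution. I would insert the identity of Proposition \ref{multi p.g.f.}, namely $G_{\bold X,\bold Y}(t_1,\dotsc,t_d;z_1,\dotsc,z_m) = G_{\bold X}(t_1\cdot\prod_{i=1}^m z_i^{a_{i1}},\dotsc,t_d\cdot\prod_{i=1}^m z_i^{a_{id}})$, into the numerator, and the identity of Proposition \ref{p.g.f.}, namely $G_{\bold Y}(z_1,\dotsc,z_m) = G_{\bold X}(\prod_{i=1}^m z_i^{a_{i1}},\dotsc,\prod_{i=1}^m z_i^{a_{id}})$, into the denominator, obtaining the stated expression verbatim. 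Note that the derivatives are left acting on the composed function rather than pushed through by the chain rule, so no reparametrization is required.

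The main obstacle I anticipate is purely bookkeeping in the second step: justifying that differentiation in the $t_r$, evaluation at $t=1$, and extraction of the $z$-coefficient each commute with the defining summation, and that the resulting $z$-series has well-defined coefficients. Because $G_{\bold X,\bold Y}$ is a power series with non-negative coefficients summing to $1$, every one of these operations acts termwise and no analytic convergence questions arise—coefficient extraction is a formal operation. The only genuine hypothesis needed is $\PP(\bold Y = \bold k) > 0$, ensuring the ratio is defined.
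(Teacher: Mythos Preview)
Your proposal is correct and follows essentially the same route as the paper: first derive the intermediate ratio formula involving $G_{\bold X,\bold Y}$ and $G_{\bold Y}$ by expanding the conditional factorial moment and recognizing the falling factorials as $t$-derivatives evaluated at $1$, then substitute the closed forms from Propositions~\ref{p.g.f.} and~\ref{multi p.g.f.}. Your write-up is in fact more explicit than the paper's, which leaves the falling-factorial/derivative identification and the commutation of operations entirely to the reader.
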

\begin{exmp}
If again $X_1,\dotsc, X_d$ are independent Poisson random variables (\emph{cf.}~\cite{SZ}), $X_r\sim Poisson(\lambda_r)$, $1\leq r\leq n$, then:
\begin{multline*}
\frac{\del^{s_1}}{\del t_1^{s_1}} \dotsb \frac{\del^{s_d}}{\del t_d^{s_d}} G_{\bold X}(t_1\cdot \Pi_{i=1}^m z_i^{a_{i1}}, \dotsc, t_d\cdot \Pi_{i=1}^m z_i^{a_{id}}) \\
= \frac{\del^{s_1}}{\del t_1^{s_1}} \dotsb \frac{\del^{s_d}}{\del t_d^{s_d}} \exp\left( \sum_{r=1}^d \lambda_r \Bigl( t_r \cdot \prod_{i=1}^m z_i^{a_{ir}} -1 \Bigr) \right) \\
=\left(\prod_{r=1}^d \left( \lambda_r \prod_{i=1}^m z_i^{a_{ir}} \right)^{s_r} \right)\exp\left( \sum_{r=1}^d \lambda_r \Bigl( t_r \cdot \prod_{i=1}^m z_i^{a_{ir}} -1 \Bigr) \right)
\end{multline*}
So that:
\begin{multline*}
[z_1^{k_1}\dotsb z_m^{k_m}] \left( \frac{\del^{s_1}}{\del t_1^{s_1}} \dotsb \frac{\del^{s_d}}{\del t_d^{s_d}} G_{\bold X}(t_1\cdot \Pi_{i=1}^m z_i^{a_{i1}}, \dotsc, t_d\cdot \Pi_{i=1}^m z_i^{a_{id}} ) \right) \big\vert_{t_1=\dotsb=t_d=1} \\
=\prod_{r=1}^d \lambda_r^{s_r} \cdot [z_1^{k_1}\dotsb z_m^{k_m}]  \Biggl\{\prod_{i=1}^m z_i^{(\sum_{r=1}^d a_{ir} s_r)} \cdot \exp\Bigl( \sum_{r=1}^d \lambda_r \Bigl(\prod_{i=1}^m z_i^{a_{ir}} -1 \Bigr) \Bigr) \Biggr\} \\
=\prod_{r=1}^d \lambda_r^{s_r} \cdot [z_1^{k_1-\sum_{r=1}^d a_{1r} s_r}\dotsb z_m^{k_m-\sum_{r=1}^d a_{dr}s_r}] G_{\bold Y}(z_1,\dotsc, z_m)
\end{multline*}
Thus:
\begin{multline*}
\EE(X_1^{(s_1)}\dotsb X_d^{(s_d)} \mid \bold Y)  \\
=\prod_{r=1}^d \lambda_r^{s_r} \cdot  \frac{[z_1^{k_1-\sum_{r=1}^d a_{1r} s_r}\dotsb z_m^{k_m-\sum_{r=1}^d a_{dr}s_r}] G_{\bold Y}(z_1,\dotsc, z_m) }{[z_1^{k_1}\dotsb z_m^{k_m}] G_{\bold Y}(z_1,\dotsc, z_m)}
\end{multline*}
whenever $k_i-\sum_{r=1}^d a_{ir} s_r \ge 0$ for all $i$ and $0$ otherwise.  For computations on explicit matrices $\bold A$, Sontag and Zeilberger developed a Maple package utilizing Wilf-Zeilberger Theory to obtain moments and recurrences on the distribution~\cite{SZ}.
\end{exmp}
\begin{exmp}
When $X_1,\dotsc, X_d$ have a $d$-variate multinomial distribution~\cite[pp. 31-92]{JKB} for some $N\in\NN$ and $0\leq p_1,\dotsc, p_d \leq 1$ where $\sum_{i=1}^d p_i=1$, the multivariate generating function is given:
\begin{multline*}
G_{\bold X}(t_1,\dotsc,t_d)
= \sum_{ \substack{(j_1,\dotsc, j_d) \in\NN^{d} \\ j_1+\dotsb + j_d = N}}  \binom{N}{j_1, \dotsc, j_d} p_1^{j_1} \dotsb p_d^{j_d}\cdot t_1^{j_1}\dotsb t_d^{j_d} \\
= (p_1t_1+\dotsb + p_dt_d)^N
\end{multline*}
It follows that:
\begin{multline*}
\frac{\del^{s_1}}{\del t_1^{s_1}} \dotsb \frac{\del^{s_d}}{\del t_d^{s_d}} G_{\bold X}(t_1\cdot \Pi_{i=1}^m z_i^{a_{i1}}, \dotsc, t_d\cdot \Pi_{i=1}^m z_i^{a_{id}}) \\
= \frac{\del^{s_1}}{\del t_1^{s_1}} \dotsb \frac{\del^{s_d}}{\del t_d^{s_d}}  (p_1t_1\Pi_{i=1}^m z_i^{a_{i1}}+\dotsb + p_dt_d \Pi_{i=1}^m z_i^{a_{id}})^N \\
=\frac{N!\prod_{r=1}^d \left( p_r \prod_{i=1}^m z_i^{a_{ir}} \right)^{s_r}}{(N-\sum_{r=1}^d s_r)!} \left(\sum_{r=1}^d p_rt_r\Pi_{i=1}^m z_i^{a_{ir}} \right)^{N-\sum_{r=1}^d s_r}
\end{multline*}
Whence:
\begin{multline*} \tag\dag
[z_1^{k_1}\dotsb z_m^{k_m}] \left( \tfrac{\del^{s_1}}{\del t_1^{s_1}} \dotsb \tfrac{\del^{s_d}}{\del t_d^{s_d}} G_{\bold X}(t_1\cdot \Pi_{i=1}^m z_i^{a_{i1}}, \dotsc, t_d\cdot \Pi_{i=1}^m z_i^{a_{id}} ) \right) \big\vert_{t_1=\dotsb=t_d=1} \\
=\tfrac{N!\prod_{r=1}^d p_r^{s_r}}{(N-\sum_{r=1}^d s_r)!}\cdot [z_1^{k_1}\dotsb z_m^{k_m}] \Bigl\{ \left(\Pi_{i=1}^m z_i^{\sum_{r=1}^d a_{ir}s_r}\right) (\sum_{r=1}^d p_r\Pi_{i=1}^m z_i^{a_{ir}})^{N-\sum_{r=1}^d s_r}\Bigr\} \\
=\tfrac{N!\prod_{r=1}^d p_r^{s_r}}{(N-\sum_{r=1}^d s_r)!}\cdot [z_1^{k_1-\sum_{r=1}^d a_{1r} s_r}\dotsb z_m^{k_m-\sum_{r=1}^d a_{dr}s_r}] \Bigl\{ (\sum_{r=1}^d p_r\Pi_{i=1}^m z_i^{a_{ir}})^{N-\sum_{r=1}^d s_r}\Bigr\}
\end{multline*}
The multinomial theorem permits the re-writing of the bracketed expression in (\dag) above:
\begin{multline*} \tag{\ddag}
(\sum_{r=1}^d p_r\Pi_{i=1}^m z_i^{a_{ir}})^{N-\sum_{r=1}^d s_r} \\
= \sum_{\substack{(j_1,\dotsc, j_d) \in\NN^{d} \\ j_1+\dotsb + j_d = N-\sum_{r=1}^d s_r}} \binom{N-\sum_{r=1}^d s_r}{j_1,\dotsc, j_d} \left( \Pi_{r=1}^d p_r^{j_r} \right) \left(\Pi_{i=1}^m z_i^{\sum_{r=1}^d a_{ir}j_r} \right)
\end{multline*}
So that the coefficient of $z_1^{k_1-\sum_{r=1}^d a_{1r} s_r}\dotsb z_m^{k_m-\sum_{r=1}^d a_{dr}s_r}$ in (\ddag) is:
\[
\sum_{\substack{(j_1,\dotsc, j_d) \in\NN^{d} \\ j_1+\dotsb + j_d = N-\sum_{r=1}^d s_r \\ (k_1,\dotsc,k_m)^T=\bold A(j_1+s_1,\dotsc,j_d+s_d)^T}}
\binom{N-\sum_{r=1}^d s_r}{j_1,\dotsc, j_d} \left( \Pi_{r=1}^d p_r^{j_r} \right)
\]

Dividing the above by $[z_1^{k_1}\dotsb z_m^{k_m}] G_{\bold Y}(z_1,\dotsc, z_m)$ and multiplying by $\tfrac{N!\prod_{r=1}^d p_r^{s_r}}{(N-\sum_{r=1}^d s_r)!}$ will then yield the desired conditional mixed moment.
\end{exmp}

\section*{Acknowledgements}
This paper would not have been possible without the support and guidance of Dr. Eduardo Sontag.  Thanks are also due to Dr. Doron Zeilberger.  Supported in part by grant AFOSR FA9550-11-1-0247.


\bibliographystyle{amsalpha}
\bibliography{LinrTransMultvGenFn}

\providecommand{\bysame}{\leavevmode\hbox to3em{\hrulefill}\thinspace}
\providecommand{\MR}{\relax\ifhmode\unskip\space\fi MR }
\providecommand{\MRhref}[2]{%
  \href{http://www.ams.org/mathscinet-getitem?mr=#1}{#2}
}
\providecommand{\href}[2]{#2}
\begin{thebibliography}{JKB96}

\bibitem[FS09]{FS}
P.~Flajolet and R.~Sedgewick, \emph{Analytic combinatorics}, Cambridge
  University Press, 2009.

\bibitem[JKB96]{JKB}
N.~L. Johnson, S.~Kotz, and N.~Balakrishnan, \emph{Discrete multivariate
  distributions}, John Wiley \& Sons, Inc., 1996.

\bibitem[PW08]{PW}
R.~Pemantle and M.~C. Wilson, \emph{Twenty combinatorial examples of
  asymptotics derived from multivariate generating functions}, SIAM Review
  \textbf{50} (2008), no.~2, 199--272.

\bibitem[SZ10]{SZ}
E.~D. Sontag and D.~Zeilberger, \emph{A symbolic computation approach to a
  problem involving multivariate poisson distributions}, Adv. Appl. Math
  \textbf{44} (2010), no.~4, 359--377.

\bibitem[Wil94]{Wilf}
H.~S. Wilf, \emph{Generatingfunctionology}, second ed., Academic Press, Inc.,
  1994.

\bibitem[Xek87]{X}
E.~Xekalaki, \emph{A method of obtaining the probability distribution of $m$
  components conditional on $l$ components of a random vector}, Rev. Roumaine
  Math. Appl. \textbf{32} (1987), no.~6, 581--583.

\end{thebibliography}


\end{document}